\newtheorem{theorem}{Theorem}[section]
\newtheorem{corollary}[theorem]{Corollary}
\newtheorem{lemma}[theorem]{Lemma}
\newtheorem{proposition}[theorem]{Proposition}
\newtheorem{conjecture}[theorem]{Conjecture}
\numberwithin{equation}{section}
\def\C{{\mathbb C}}
\def\D{{\mathbb D}}
\def\N{{\mathbb N}}
\def\R{{\mathbb R}}
\def\E{{\mathbb E}}
\def\T{{\mathbb T}}
\def\P{{\mathbb P}}
\newcommand{\cE}{\mathcal{E}}
\def\eps{\varepsilon}
\begin{document}

\title{Equidistribution of Zeros of Random Polynomials}

\author{Igor Pritsker and Koushik Ramachandran}

\date{}

\maketitle

\begin{abstract}
We study the asymptotic distribution of zeros for the random polynomials $P_n(z) = \sum_{k=0}^n A_k B_k(z)$, where $\{A_k\}_{k=0}^{\infty}$ are non-trivial i.i.d. complex random variables. Polynomials $\{B_k\}_{k=0}^{\infty}$ are deterministic, and are selected from a standard basis such as Szeg\H{o}, Bergman, or Faber polynomials associated with a Jordan domain $G$ bounded by an analytic curve. We show that the zero counting measures of $P_n$ converge almost surely to the equilibrium measure on the boundary of $G$ if and only if $\E[\log^+|A_0|]<\infty$.
\end{abstract}

\section{Introduction}

Zeros of polynomials of the form $P_n(z)=\sum_{k=0}^{n} A_k z^k,$ where $\{A_n\}_{k=0}^n$ are random coefficients,  have been studied by Bloch and P\'olya, Littlewood and Offord, Erd\H{o}s and Offord, Kac, Rice, Hammersley, Shparo and Shur, Arnold, and many other authors. The early history of the subject with numerous references is summarized in the books by Bharucha-Reid and Sambandham \cite{BR}, and by Farahmand \cite{Fa}. It is well known that, under mild conditions on the probability distribution of the coefficients, the majority of zeros of these polynomials accumulate near the unit circumference, being equidistributed in the angular sense.  Introducing modern terminology, we call a collection of random polynomials $P_n(z)=\sum_{k=0}^n A_k z^k,\ n\in\N,$ the ensemble of \emph{Kac polynomials}.
Let $\{Z_k\}_{k=1}^n$ be the zeros of a polynomial $P_n$ of degree $n$, and define the \emph{zero counting measure}
\[
\tau_n=\frac{1}{n}\sum_{k=1}^n \delta_{Z_k}.
\]
The fact of equidistribution for the zeros of random polynomials can now be expressed via the weak convergence of $\tau_n$ to the normalized arclength measure $\mu_{\T}$ on the unit circumference $\T$, where $d\mu_{\T}(e^{it}):=dt/(2\pi).$ Namely, we have that $\tau_n \stackrel{w}{\rightarrow} \mu_{\T}$ with probability 1 (abbreviated as a.s. or almost surely). More recent work on the global distribution of zeros of Kac polynomials include papers of Ibragimov and Zaporozhets \cite{IZ}, Kabluchko and Zaporozhets \cite{KZ1,KZ2}, etc. In particular, Ibragimov and Zaporozhets \cite{IZ} proved that if the coefficients are independent and identically distributed non-trivial random variables, then the condition $\E[\log^+|A_0|]<\infty$ is necessary and sufficient for $\tau_n \stackrel{w}{\rightarrow} \mu_{\T}$ almost surely. Here, $\E[X]$ denotes the expectation of a random variable $X$, and $X$ is called non-trivial if $\P(X=0)<1$.

Asymptotic distribution of zeros for deterministic polynomials, and especially zeros of sections of a power series, has a long history that dates back to at least the work of Jentzsch and Szeg\H{o}, see e.g. Andrievskii and Blatt \cite{AB} for an overview. It is natural to use the wealth of accumulated results in the study of zeros for random power series. A number of authors followed this approach, and the recent paper of Fern\'andez \cite{Fe} deduces the above result of Ibragimov and Zaporozhets from the criterion of equidistribution of zeros of partial sums due to Carlson-Bourion and Erd\H{o}s-Fried. Fern\'andez shows that the gauge of a random power series satisfying the assumption of Ibragimov and Zaporozhets is equal to 1, so that the Carlson-Bourion characterization of zero equidistribution applies, see \cite{Fe} for details. We present a similar, and perhaps somewhat simpler argument that proves the result of Ibragimov and Zaporozhets. The assumption $\E[\log^+|A_0|]<\infty$ for a sequence of non-trivial i.i.d. complex random variables $\{A_n\}_{n=0}^\infty$ is well known to be equivalent to 
\begin{align*} 
\limsup_{n\to\infty} |A_n|^{1/n} = 1 \quad\mbox{ a.s.},
\end{align*}
while $\E[\log^+|A_0|]=\infty$ holds if and only if the above $\limsup$ is infinite with probability one, see Arnold \cite{Ar}, Ibragimov and Zaporozhets \cite{IZ}, etc. Thus the radius of convergence for the random power series $\sum_{k=0}^\infty A_k z^k$ is either 1 or 0 almost surely. In particular, this series converges in the unit disk a.s. under the assumption $\E[\log^+|A_0|]<\infty$, which immediately gives that no point inside $\D$ can be a point of accumulation for the zeros of partial sums $P_n$ by Hurwitz's theorem. Moreover, the results of Jentzsch and Szeg\H{o} provide us a subsequence of partial sums with equidistributed zeros according to the measure $\mu_\T.$ If the counting measures $\tau_n$ fail to converge to $\mu_\T$ along a subsequence $n_k,\ k\in\N,$ then we show that for some $c,q\in(0,1)$ the coefficients satisfy
\begin{align*} 
|A_n| \le q^n,\quad c n_k\le n\le n_k \quad\mbox{ a.s.}
\end{align*}
The above behavior of the coefficient sequence is often referred to as Hadamard-Ostrowski gaps. On the other hand, we prove that for any sequence of non-trivial i.i.d. random coefficients there is $b>0$ such that 
\begin{align*} 
\liminf_{n\to\infty} \left(\max_{n-b\log{n}<k\le n} |A_k|\right)^{1/n} \ge 1 \quad\mbox{ a.s.}
\end{align*}
Since this is clearly incompatible with Hadamard-Ostrowski gaps, we conclude that the whole sequence of the counting measures $\tau_n$ for the partial sums $P_n$ must converge to $\mu_\T$ with probability one.
If $\E[\log^+|A_0|]=\infty$ then the radius of convergence for our random power series is 0 almost surely. A result of Rosenbloom \cite{Ro} now gives that there is a subsequence of partial sums with all zeros accumulating at the origin. In fact, his Theorem XVIII on pages 40--41 of \cite{Ro} gives more precise information about zeros.

We apply essentially the same approach to prove a generalization of zero equidisribution criterion for sums of random power series spanned by various bases, e.g., by orthogonal polynomials. Sufficient conditions for almost sure equidistribution of zeros of random orthogonal polynomials were considered by Shiffman and Zelditch \cite{SZ1} and \cite{SZ2}, Bloom \cite{Bl1} and \cite{Bl2}, Bloom and Shiffman \cite{BS}, Bloom and Levenberg \cite{BL}, Bayraktar \cite{Ba1} and \cite{Ba2}, and others. Pritsker \cite{Pr2} and \cite{Pr3} considered zero distribution for random polynomials spanned by general bases.

\section{Equidistribution of Zeros for Random Sums of Polynomials}

We now consider more general ensembles of random polynomials 
$$P_n(z) = \sum_{k=0}^n A_k B_k(z)$$
spanned by various bases $\{B_k\}_{k=0}^{\infty}.$ Let $B_k(z)=\sum_{j=0}^k b_{j,k}z^j$, where $b_{j,k}\in\C$ for all $j$ and $k$, and $b_{k,k}\neq 0$ for all $k$, be a polynomial basis, so that $\text{deg}\  B_k=k$ for all $k\in\N\cup\{0\}.$ Given a compact set $E\subset\C$ of positive logarithmic capacity $\text{cap}(E)$, we denote the equilibrium measure of $E$ by $\mu_E$, which is a positive unit Borel measure supported on the outer boundary of $E$, see \cite{Ra} for background. It is known that under rather weak assumptions on the random coefficients $A_k$ and the basis polynomials $B_k$ associated with the set $E$, the zeros of random polynomials $P_n$ are almost surely equidistributed according to the measure $\mu_E$. This means that the counting measures $\tau_n$ in zeros of $P_n$ converge weakly to $\mu_E$ with probability one.

If $E$ is a finite union of rectifiable curves and arcs, we call the polynomials orthonormal with respect to the arclength measure $ds$ by Szeg\H{o} polynomials. When $E$ is a compact set of positive area, we call the polynomials orthonormal with respect to the area measure $dA$ on $E$ by Bergman polynomials. The basis of Faber polynomials is defined for any compact set $E$ with simply connected unbounded component $\Omega$ of the complement $\overline{\C}\setminus E.$ The $n$-th Faber polynomial is the polynomial part of Laurent expansion for $\Phi^n(z)$ at $\infty$, where $\Phi:\Omega\to\Delta$ is the canonical conformal mapping of $\Omega$ onto $\Delta:=\overline{\C}\setminus\overline{\D}$ normalized by $\Phi(\infty)=\infty$ and $\Phi'(\infty)>0.$

\begin{theorem} \label{thm2.1}
Suppose that $E$ is the closure of a Jordan domain $G$ with analytic boundary $L,$ and that the basis $\{B_k\}_{k=0}^{\infty}$ is given either by Szeg\H{o}, or by Bergman, or by Faber polynomials. Assume further that the random coefficients $\{A_k\}_{k=0}^{\infty}$ are non-trivial i.i.d. complex random variables. The zero counting measures of $P_n(z) = \sum_{k=0}^n A_k B_k(z)$ converge almost surely to $\mu_E$ if and only if $\E[\log^+|A_0|]<\infty$.
\end{theorem}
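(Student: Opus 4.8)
\medskip
\noindent\emph{Proof plan.} We transport the problem to the unit disk and reuse the argument for Kac polynomials sketched in the Introduction. The analytic input is the classical strong asymptotics of the three bases on a Jordan domain with analytic boundary: there exist $\rho\in(0,1)$, $\rho_1\in(\rho,1)$, a function $W$ holomorphic and zero-free on $\{z:|\Phi(z)|>\rho\}$, and scalars $c_k\neq0$ with $|c_k|^{1/k}\to1$, such that for every $\rho<\rho'<1$
\[
B_k(z)=c_k\,W(z)\,\Phi(z)^k\bigl(1+\eta_k(z)\bigr),\qquad |\Phi(z)|\ge\rho',\qquad \sup_{|\Phi|\ge\rho'}|\eta_k|\le C\rho_1^{\,k};
\]
moreover $\|B_k\|_E^{1/k}\to1$ and $|b_{k,k}|^{1/k}\to1/\mathrm{cap}(E)$ (for Faber polynomials $c_k\equiv1$ and $W\equiv1$; for Szeg\H{o} and Bergman polynomials $W$ is built from $\Phi'$ and $c_k$ has polynomial growth). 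Put $a_k:=A_kc_k$ and $Q_n(w):=\sum_{k=0}^n a_kw^k$. Since $|c_k|^{1/k}\to1$, the independent sequence $\{a_k\}$ still satisfies a.s.\ both $\limsup_k|a_k|^{1/k}=\limsup_k|A_k|^{1/k}$ and the window estimate $\liminf_n(\max_{n-b\log n<k\le n}|a_k|)^{1/n}\ge1$; these are the only probabilistic facts used in the Introduction's treatment of the Kac case, so that argument applies to $\{a_k\}$ with no change. Finally, on a suitable annular neighbourhood $N$ of $L$ (the $\Phi$-preimage of an annulus $\{\rho'<|w|<R_1\}$ about $\T$, $R_1<((1+\eps)\rho_1)^{-1}$) one has the identity $P_n(z)=W(z)\bigl(Q_n(\Phi(z))+R_n(z)\bigr)$ with $R_n(z):=\sum_{k\le n}a_k\Phi(z)^k\eta_k(z)$, the point being that $R_n$ will be of lower order than $Q_n\circ\Phi$ on the curves used below.

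\medskip
\noindent\textbf{Sufficiency.} Assume $\E[\log^+|A_0|]<\infty$, equivalently $\limsup_k|a_k|^{1/k}=1$ a.s. Using the representation of $B_k$ on the level curves $\{|\Phi|=r\}$, $\rho<r<1$, the maximum principle, and $|a_k|\le(1+\eps)^k$ for large $k$, the series $\sum_k A_kB_k$ converges locally uniformly in $G$ to a holomorphic $F$; the event $F\equiv0$ forces the upper-triangular relations $\sum_{k\ge j}A_kb_{j,k}=0$ for all $j$, an event of probability zero for non-trivial i.i.d.\ coefficients, so $F\not\equiv0$ a.s.\ and Hurwitz's theorem gives $\tau_n(K)\to0$ a.s.\ for every compact $K\subset G$. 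By the Introduction's argument applied to $\{a_k\}$, the zero counting measures of $Q_n$ converge a.s.\ to $\mu_{\T}$. To descend to $P_n$, note that $\sup_n\|R_n\|_N<\infty$, while on the outer part of $\partial N$ (after perturbing the outer radius onto a circle on which $Q_n$ is bounded below) $Q_n\circ\Phi$ is large, and on the inner part $Q_n\circ\Phi$ converges to the nonvanishing series $\sum a_k\Phi(z)^k$. Applying the argument principle to $P_n$ over $N$ and over its angular subregions, and using that $P_n$ has $o(n)$ zeros inside the inner curve (by the Hurwitz step) whereas $Q_n$ has $n-o(n)$ zeros inside the outer circle which equidistribute to $\mu_{\T}$, one finds that $\tau_n|_N$ converges weakly to the push-forward of $\mu_{\T}$ under $\Phi^{-1}$. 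Since $\mu_E$ is precisely that push-forward (harmonic measure at $\infty$ is a conformal invariant) and $N\supset L=\operatorname{supp}\mu_E$, we conclude $\tau_n\to\mu_E$ a.s.

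\medskip
\noindent\textbf{Necessity.} If $\E[\log^+|A_0|]=\infty$ then $\limsup_k|a_k|^{1/k}=\infty$ a.s., so $\sum_k a_kw^k$ has radius of convergence $0$ and Rosenbloom's Theorem~XVIII \cite{Ro} produces a subsequence $n_k$ with $\tau_{n_k}^{Q}\to\delta_0$; in particular $Q_{n_k}$ has only $o(n_k)$ zeros with $|w|\ge\rho'$. For $n=n_k$ the coefficient at the super-record index occurring in that construction dominates every other term of $Q_n$ on all of $\partial N$, so $R_n$ is negligible there and Rouch\'e's theorem shows that $P_n$ has as many zeros in $N$ as $Q_n$ has in $\Phi(N)$, namely $o(n)$. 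As $N$ contains a neighbourhood of $L$, this forces $\tau_{n_k}(\{\operatorname{dist}(\cdot,L)<\delta\})\to0$ for small $\delta$, whereas a weak limit equal to $\mu_E$ would assign that neighbourhood mass $1$; hence $\tau_{n_k}\not\to\mu_E$ and the contrapositive is proved.

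\medskip
\noindent\textbf{Where the work is.} The basis asymptotics and the Hurwitz/Borel--Cantelli steps are routine; the crux is the transfer. Because $P_n=W\cdot(Q_n\circ\Phi+R_n)$ lives only on $N$, one must (i) control $R_n$ uniformly on $N$, (ii) run Rouch\'e or the argument principle on level curves of $\Phi$ that avoid the zero-clusters of $Q_n$, where $Q_n\circ\Phi$ would be too small to dominate $R_n$, and (iii) account separately, via the Hurwitz step inside $G$ and a Bernstein--Walsh bound outside $N$, for the zeros of $P_n$ not lying in $N$. In the necessity direction the unboundedness of $\{a_k\}$ forces step (ii) to be organised around the Newton polygon of $Q_n$, which is precisely the point at which Rosenbloom's sharp information on the location of the zeros is needed.
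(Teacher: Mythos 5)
Your overall strategy---strong asymptotics $B_k=c_kW\Phi^k(1+\eta_k)$ on a neighbourhood of $\overline{\Omega}$, reduction to the Kac polynomial $Q_n(w)=\sum a_kw^k$ with $a_k=A_kc_k$, and the observation that Lemmas \ref{lem3.1}--\ref{lem3.2} survive the reweighting because $|c_k|^{1/k}\to1$---is reasonable, and your necessity half is essentially correct and close to the paper's: along record indices with $|a_n|^{1/n}\ge\max_{k<n}|a_k|^{1/k}\to\infty$ the top term dominates everything else (including $R_n$) on both boundary curves of $N$, so Rouch\'e expels all zeros of $P_n$ from a fixed neighbourhood of $L$ infinitely often. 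The paper runs the same Rouch\'e comparison directly between $A_nB_n$ and $\sum_{k<n}A_kB_k$ on $L_\rho$ using only the two-sided bounds \eqref{3.10}, never introducing $Q_n$.

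The genuine gap is the transfer step in the sufficiency direction. You extract from the Kac argument only the weak convergence $\tau_n^Q\to\mu_{\T}$ and then try to push it to $P_n=W(Q_n\circ\Phi+R_n)$ by the argument principle over angular subregions of $N$. The boundaries of those subregions necessarily cross the annulus where the $n-o(n)$ zeros of $Q_n\circ\Phi$ cluster, and weak convergence of $\tau_n^Q$ provides no lower bound on $|Q_n\circ\Phi|$ there, so the bounded error $R_n$ cannot be dominated where you need it. Indeed the implication you are invoking is false at this level of generality: $w^n-1$ has perfectly equidistributed zeros on $\T$ and $R_n\equiv 1$ is bounded, yet $(w^n-1)+1=w^n$ has no zeros at all in any annulus around $\T$. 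Equidistribution of $Q_n$ plus boundedness of $R_n$ does not yield equidistribution of $P_n$; you must carry quantitative information through the transfer. The repair is to bypass $\tau_n^Q$ entirely: what the contradiction argument for $Q_n$ actually yields is the lower bound \eqref{3.6} on an exterior circle $|w|=R>1$, where $\max_{|w|=R}|Q_n|$ grows geometrically and swamps $\|R_n\|\le C$; feeding that into Theorem G applied to $P_n$ itself (whose hypotheses \eqref{3.4} and \eqref{3.5} you have already verified) closes the argument. The paper avoids the decomposition altogether: it applies Theorem G directly to $P_n$, and when \eqref{3.6} fails on $L_R$ it recovers the coefficients through $A_n=\frac{1}{2\pi i}\int_{L_R}P_n(z)\,dz/(zB_n(z))$ together with the iterated bound $\|P_{n-k}\|_{L_R}\le C^k\|P_n\|_{L_R}$, producing Hadamard--Ostrowski gaps that contradict Lemma \ref{lem3.2}.
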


Our proofs show that the above result holds true for many other standard bases used to expand analytic functions, e.g. for Lagrange interpolation polynomial basis as well as for various extremal polynomials. In the case of the unit disk and the Faber basis, our result reduces to that of Ibragimov and Zaporozhets.
 
In the proof of Theorem \ref{thm2.1}, we obtain additional useful facts summarized below.  
 
\begin{corollary} \label{cor2.2}
Suppose that $E$ is the closure of a Jordan domain $G$ with analytic boundary $L,$ and that the basis $\{B_k\}_{k=0}^{\infty}$ is given either by Szeg\H{o}, or by Bergman, or by Faber polynomials. If $\{A_k\}_{k=0}^{\infty}$ are non-trivial i.i.d. complex random variables such that $\E[\log^+|A_0|]<\infty$, then the random polynomials $P_n(z) = \sum_{k=0}^n A_k B_k(z)$ converge almost surely to a random analytic function $f$ that is not identically zero. Moreover,
\begin{align} \label{2.1}
\lim_{n\to\infty} |P_n(z)|^{1/n} = |\Phi(z)|, \quad z\in\Omega,
\end{align}
holds with probability one.
\end{corollary}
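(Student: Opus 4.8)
The plan is to treat the three assertions in turn, all of them resting on the analytic‑boundary asymptotics of the basis. Since $L$ is analytic, $\Phi$ extends conformally across $L$ to $\{z:|\Phi(z)|>r_0\}$ for some $r_0\in(0,1)$, and for each of the three bases one has, locally uniformly on $\overline{\Omega}$,
\[
B_k(z)=\lambda_k\,g(z)\,\Phi(z)^k\,(1+o(1)),
\]
with $g$ analytic and zero‑free on $\overline{\Omega}$ and $\lambda_k>0$, $\lambda_k^{1/k}\to1$ (for Faber $g\equiv1$, $\lambda_k\equiv1$; for Szeg\H{o} and Bergman these are the Carleman/Szeg\H{o} formulas), together with $|b_{k,k}|^{1/k}\to1/\text{cap}(E)$. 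Two consequences will be used throughout: (i) $\|B_k\|_E\le Ck$, so $\|B_k\|_E^{1/k}\to1$; and (ii) for every compact $K\subset G$ there are $C_K<\infty$, $\rho_K\in(0,1)$ with $\|B_k\|_K\le C_K\,k\,\rho_K^{\,k}$ --- obtained by combining $B_k=\Phi^k+O(r_0^k)$ on $\{|\Phi|\ge r_0\}$ with the maximum principle inside $\{|\Phi|=r_0\}$.

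First I would establish the convergence of $P_n$. Under the hypothesis $\E[\log^+|A_0|]<\infty$, equivalently $\limsup_k|A_k|^{1/k}=1$ a.s., for a.e.\ $\omega$ and every $\eps>0$ one has $|A_k|\le(1+\eps)^k$ for all large $k$; fixing a compact $K\subset G$ and $\eps$ with $(1+\eps)\rho_K<1$, the bound $\|A_kB_k\|_K\le C_K\,k\,((1+\eps)\rho_K)^k$ forces uniform convergence of $\sum_kA_kB_k$ on $K$, hence $P_n\to f:=\sum_{k\ge0}A_kB_k$ locally uniformly on $G$ with $f$ analytic; the tail is moreover exponentially small, $\sup_K|f-P_n|\le C_K'\,n\,((1+\eps)\rho_K)^{\,n}$. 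For $f\not\equiv0$: expand about $0\in G$; the exponential tail estimate and Cauchy bounds for $b_{j,k}$ justify $f(z)=\sum_{j\ge0}c_jz^j$ with $c_j=b_{j,j}A_j+\sum_{k>j}b_{j,k}A_k$ (the series convergent a.s.), so, since $b_{j,j}\ne0$, $\{c_j=0\}=\{A_j=\phi_j(A_{j+1},A_{j+2},\dots)\}$ for a Borel function $\phi_j$. If $A_0$ is non‑degenerate, $q:=\sup_{c\in\C}\P(A_0=c)<1$, and conditioning successively on $\sigma(A_{j+1},A_{j+2},\dots)$ --- using $\P(A_j=Y)\le q$ whenever $Y$ is $\sigma(A_{j+1},\dots)$‑measurable --- gives $\P\!\big(\bigcap_{j=0}^M\{c_j=0\}\big)\le q^{\,M+1}$, whence $\P(f\equiv0)=0$. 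If $A_0\equiv a\ne0$, then $f=a\sum_kB_k$ is deterministic, and the generating‑function identity for the basis (for Faber, $\sum_kF_k(z)=\Psi'(1)/(\Psi(1)-z)$ with $\Psi=\Phi^{-1}$, and the corresponding reproducing‑kernel identities in the Szeg\H{o} and Bergman cases) shows $\sum_kB_k\not\equiv0$. In all cases $f\not\equiv0$ a.s.

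For \eqref{2.1}: the upper bound is soft. By Bernstein--Walsh, $|P_n(z)|\le\|P_n\|_E\,|\Phi(z)|^n$ for $z\in\Omega$, while $\|P_n\|_E\le(n+1)\max_{k\le n}|A_k|\|B_k\|_E$ is at most a polynomial in $n$ times $(1+\eps)^n$ a.s., so $\limsup_n\|P_n\|_E^{1/n}\le1$ and hence $\limsup_n|P_n(z)|^{1/n}\le|\Phi(z)|$. For the matching lower bound, note first that a subsequence with $\|P_n\|_E^{1/n}\to\beta<1$ would give $P_n\to0$ uniformly on $\overline{G}$, forcing $f\equiv0$; therefore $\|P_n\|_E^{1/n}\to1$ a.s. Writing $v_n:=\tfrac1n\log|P_n|$ and $d_n:=\deg P_n$ (so $d_n/n\to1$),
\[
v_n(z)=\tfrac1n\log|A_{d_n}b_{d_n,d_n}|+\int\log|z-t|\,d\tau_n(t),
\]
and I would split $\tau_n$ at a large level curve $\Gamma_R=\{|\Phi|=R\}$. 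By the equidistribution $\tau_n\stackrel{w}{\rightarrow}\mu_E$ of Theorem~\ref{thm2.1}, the part of $\tau_n$ inside $\Gamma_R$ converges narrowly to $\mu_E$, so --- by the principle of descent and the fact that $\mu_E$ is the equilibrium measure --- its potential at a fixed $z\in\Omega$ tends to $\int\log|z-t|\,d\mu_E(t)=\log|\Phi(z)|+\log\text{cap}(E)$; the $o(n)$ zeros escaping past $\Gamma_R$, being far from $E$, contribute $\tfrac1n\sum_{\mathrm{esc}}\log|z-Z_\ell|=\tfrac1nL_n+o(1)$, where $L_n:=\log\|P_n\|_E-\log|A_{d_n}b_{d_n,d_n}|-d_n\log\text{cap}(E)\ge0$. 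Since
\[
\tfrac1n\log|A_{d_n}b_{d_n,d_n}|+\tfrac1nL_n=\tfrac1n\log\|P_n\|_E-\tfrac{d_n}{n}\log\text{cap}(E)\ \longrightarrow\ -\log\text{cap}(E),
\]
adding the three pieces gives $v_n(z)\to\log|\Phi(z)|$.

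The main obstacle is this last step, and it is genuinely where the content lies: one must show that the zeros of $P_n$ escaping to infinity are $o(n)$ in number and that they compensate, exactly in the limit, for any shortfall of $\tfrac1n\log|A_{d_n}b_{d_n,d_n}|$ below $-\log\text{cap}(E)$ (the role of the $+\tfrac1nL_n$ term) --- in particular $\tfrac1n\log|A_{d_n}|$ need not converge to $0$; and one must justify the limit under the integral for the inside part, equivalently that the zeros of $P_n$ do not approach the fixed $z\in\Omega$ faster than sub‑exponentially, which follows from $\tau_n\stackrel{w}{\rightarrow}\mu_E$ together with an anti‑concentration estimate for $P_n(z)$. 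The remaining ingredients --- $f$ analytic, $f\not\equiv0$, the Bernstein--Walsh upper bound, $\|P_n\|_E^{1/n}\to1$ --- are soft.
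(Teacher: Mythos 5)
Your first assertion (local uniform convergence of $P_n$ in $G$ and $f\not\equiv0$) is essentially sound, though you take a different route from the paper: the paper deduces $f\not\equiv0$ directly from $\limsup|A_k|^{1/k}=1$ a.s.\ together with the uniqueness of Szeg\H{o}/Bergman/Faber expansions, whereas you expand $f$ in a Taylor series and run a conditioning argument. That argument is fine when $A_0$ is non-degenerate, but your fallback for the degenerate case $A_0\equiv a\ne0$ is not a proof for the Szeg\H{o} and Bergman bases: the series $\sum_k B_k$ has coefficients that are not square-summable, so reproducing-kernel identities do not apply to it, and what you actually need is precisely the uniqueness theorem for such expansions that the paper cites.

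The real gap is in the lower bound of \eqref{2.1}, and you have in effect flagged it yourself. First, you invoke the equidistribution $\tau_n\stackrel{w}{\rightarrow}\mu_E$ of Theorem \ref{thm2.1}; in the paper the logical order is the reverse --- the lower estimate \eqref{3.6} on $L_R$ (which is equivalent to \eqref{2.1}) is the \emph{input} that produces equidistribution via Grothmann's theorem --- so your argument cannot stand in for the paper's and at best treats Theorem \ref{thm2.1} as a black box. Second, even granting that, weak convergence of $\tau_n$ does not yield the lower bound you need for $\int\log|z-t|\,d\tau_n(t)$: the principle of descent gives the opposite inequality, $\limsup_n\int\log|z-t|\,d\tau_n(t)\le\int\log|z-t|\,d\mu_E(t)$, and a genuine lower bound requires excluding zeros sub-exponentially close to $z$ --- which is equivalent to the lower bound on $|P_n(z)|$ you are trying to prove, so the appeal to an unproved ``anti-concentration estimate'' is circular. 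Third, the identity $\tfrac1n\sum_{\mathrm{esc}}\log|z-Z_\ell|=\tfrac1nL_n+o(1)$ for the escaping zeros is asserted, not proved, and is exactly the delicate compensation phenomenon at stake. The paper avoids all of this: if \eqref{3.6} fails on $L_R$ along a subsequence, the integral representation \eqref{3.9} combined with the two-sided bounds \eqref{3.10} on $|B_n|$ along level curves converts the smallness of $\|P_{n_m}\|_{L_R}$ into a Hadamard--Ostrowski gap $|A_{n_m-k}|\le q^{n_m}$ for $0\le k\le\eps n_m$, which contradicts Lemma \ref{lem3.2}; Bernstein--Walsh and \eqref{3.4} then upgrade the resulting $\liminf$ to the pointwise limit \eqref{2.1}. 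Reworking your lower bound along these lines (or supplying genuine proofs of the two missing steps) is needed before the argument is complete.
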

Note that \eqref{2.1} implies divergence of $P_n$ in $\Omega$ with probability one, so that $L=\partial G$ is the natural boundary for the random series $\sum_{k=0}^\infty A_k B_k(z)$. This phenomenon is well known in the case of standard power series, see \cite{Ka}, for which the circle of convergence is almost surely the natural boundary. More details and general results related to \eqref{2.1} may be found in \cite{BL}.

We know from the Ibragimov-Zaporozhets theorem and Theorem \ref{thm2.1} that if $\E[\log^+|A_0|] = \infty,$ then the equidistribution of zeros does not hold with probability one. Behavior of zeros for random power series in this case was studied in \cite{GZ} and \cite{KZ1}. However, it is possible to find a polynomial basis and i.i.d. coefficients for which there is still a subsequence of $\tau_n$ that converges to $\mu_{\mathbb{T}}$ a.s. We give such a construction below. Recall that a positive measurable function $L$ is called slowly varying (at infinity) if it is defined in a neighborhood of infinity, and if for every $t>0$ we have
$$\lim_{x\to\infty}\dfrac{L(tx)}{L(x)} = 1.$$
A random variable $X$ is said to have a slowly varying tail if $L(x) = \mathbb{P}\left(|X| > x\right)$ is a slowly varying function. For example, if  $\mathbb{P}\left(|X| > x\right) = h(x)/(\log{x})^\alpha$ for all large $x\in\R$, where $\alpha>0$ and $\lim_{x\to\infty} h(x) = c > 0$, then $X$ has a slowly varying tail. Note that in this example $\E[\log^+|X|]=\infty$ if and only if $\alpha\le 1.$

\begin{proposition} \label{prop2.3}
Let $\{A_k\}_{k=0}^\infty$ be a sequence of i.i.d. random variables with $A_0\geq 1$ almost surely. Consider the sequence of random polynomials $P_n(z) = \sum_{k=0}^{n}A_k(z^k-1),\ n\in\N.$\\ 
\textup{(i)} If $\E[\log A_0]<\infty$ then $\tau_n \stackrel{w}{\rightarrow} \mu_{\mathbb{T}}$ as $n\to\infty$ almost surely.\\
\textup{(ii)} If $\E[\log A_0]=\infty$ and $A_0$ has a slowly varying tail,  then there is a subsequence such that 
$\tau_{n_k} \stackrel{w}{\rightarrow} \mu_{\mathbb{T}}$ a.s.
\end{proposition}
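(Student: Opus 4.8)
The plan is to work directly with the zeros of $P_n(z)=\sum_{k=1}^{n}A_k(z^k-1)=\bigl(\sum_{k=1}^{n}A_kz^k\bigr)-\widetilde S_n$ (the $k=0$ term is identically zero), where $\widetilde S_n:=\sum_{k=1}^{n}A_k$, by decoupling the \emph{radial} distribution of the zeros (controlled by Jensen's formula) from their \emph{angular} distribution (controlled by the Erd\H{o}s--Tur\'an discrepancy estimate for zeros of a polynomial). Since $P_n(0)=-\widetilde S_n\neq 0$ and $\|P_n\|_{\overline{\D}}\le 2\widetilde S_n$, Jensen's formula on $|z|=1$ yields $\sum_{|Z_j|<1}\log\frac{1}{|Z_j|}\le\log\frac{\|P_n\|_{\overline{\D}}}{|P_n(0)|}\le\log 2$, so for each $\delta>0$ at most $O_\delta(1)$ zeros lie in $|z|<1-\delta$. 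Applying the same to the reciprocal polynomial $z^nP_n(1/z)$, whose zeros are the $1/Z_j$, whose leading coefficient is $-\widetilde S_n$ and whose value at $0$ is $A_n$, and whose sup norm on $\overline{\D}$ is again $\le 2\widetilde S_n$, bounds the number of $Z_j$ with $|z|>1+\delta$ by $\log(2\widetilde S_n/A_n)/\log(1+\delta)$. For the angular part, the coefficient vector $(c_j)$ of $P_n$ satisfies $\sum_j|c_j|=2\widetilde S_n$ and $|c_0c_n|=\widetilde S_nA_n$, so the Erd\H{o}s--Tur\'an inequality bounds the angular discrepancy of the zeros of $P_n$ by $C\sqrt{\,n\log\bigl(2\sqrt{\widetilde S_n/A_n}\bigr)\,}$.

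For part (i): the hypotheses $\E[\log A_0]<\infty$ and $A_0\ge 1$ give $A_n^{1/n}\to 1$ and $\bigl(\max_{1\le k\le n}A_k\bigr)^{1/n}\to 1$ a.s., and since $1\le \widetilde S_n/A_n\le n\max_{k\le n}A_k$ this forces $\tfrac1n\log(\widetilde S_n/A_n)\to 0$ a.s. Inserting this into the estimates above, for every $\delta>0$ all but $o(n)$ of the zeros of $P_n$ lie in the annulus $1-\delta\le|z|\le 1+\delta$, and the angular discrepancy is $o(n)$, almost surely. A routine test--function argument — the $o(n)$ stray zeros contribute $o(1)$, while the remaining zeros lie within $\delta$ of $\T$ and their arguments equidistribute by the discrepancy bound, after which one lets $\delta\to0$ — then gives $\tau_n\stackrel{w}{\rightarrow}\mu_{\T}$ a.s.

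For part (ii): now $\E[\log A_0]=\infty$ forces $\limsup_nA_n^{1/n}=\infty$ a.s.; the point is to locate a subsequence along which $A_n$ dominates $\widetilde S_{n-1}$. Indeed, if $A_{n_k}>3\widetilde S_{n_k-1}$, then $\widetilde S_{n_k}/A_{n_k}<4/3$, so the two Jensen ratios and the Erd\H{o}s--Tur\'an quantity above are all bounded by absolute constants, and the argument of part (i) applies along $n_k$ verbatim (with $O(1)$ stray zeros and angular discrepancy $O(\sqrt{n_k})=o(n_k)$), giving $\tau_{n_k}\stackrel{w}{\rightarrow}\mu_{\T}$ a.s. Thus everything reduces to showing that, almost surely, $A_n>3\widetilde S_{n-1}$ for infinitely many $n$. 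Since $A_n$ is independent of $\mathcal F_{n-1}:=\sigma(A_0,\dots,A_{n-1})$ and $\widetilde S_{n-1}$ is $\mathcal F_{n-1}$-measurable, $\P\bigl(A_n>3\widetilde S_{n-1}\mid\mathcal F_{n-1}\bigr)=L(3\widetilde S_{n-1})$, where $L(x)=\P(A_0>x)$, so by L\'evy's conditional Borel--Cantelli lemma it suffices to prove that $\sum_n L(3\widetilde S_{n-1})=\infty$ a.s.

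Establishing this divergence is the main obstacle. The mechanism is the ``single big jump'' behaviour of heavy--tailed sums: one has $\widetilde S_{n-1}\le (n-1)\max_{1\le k<n}A_k$, and because $L$ is slowly varying, $L(3\widetilde S_{n-1})$ is comparable to $L\bigl(\max_{k<n}A_k\bigr)=\min_{k<n}L(A_k)$, which is of order $1/n$ since $L(A_k)$ is (essentially) uniformly distributed on $(0,1)$; the assumption $\E[\log A_0]=\int_0^\infty L(e^t)\,dt=\infty$ is precisely what makes $L$ decay slowly enough that $\max_{k<n}A_k$ is super--polynomially large, so that the factor $3(n-1)$ in $3\widetilde S_{n-1}\le 3(n-1)\max_{k<n}A_k$ is negligible on the logarithmic scale and the comparison $L(3\widetilde S_{n-1})\sim L(\max_{k<n}A_k)$ survives. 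This is most transparent in the case $L(x)\asymp(\log x)^{-\alpha}$ with $0<\alpha\le 1$ (the example preceding the proposition), where one checks directly that $L(3\widetilde S_{n-1})\asymp 1/n$ almost surely; the general slowly varying case is handled with Potter--type bounds, yielding $\sum_n L(3\widetilde S_{n-1})\ge c\sum_n 1/n=\infty$ a.s. and completing the proof.
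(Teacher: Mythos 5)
Your overall strategy is genuinely different from the paper's: you control the moduli of the zeros via Jensen's formula applied to $P_n$ and to its reciprocal polynomial, and the arguments via the Erd\H{o}s--Tur\'an discrepancy inequality, whereas the paper divides by $A_n$ to get a monic polynomial and invokes the Blatt--Saff--Simkani theorem, checking \eqref{4.7}--\eqref{4.9} with $K=\{0\}$. Your part (i) is correct: the computations $\sum_{|Z_j|<1}\log(1/|Z_j|)\le\log 2$, the bound $\log(2\widetilde S_n/A_n)/\log(1+\delta)$ for zeros outside $|z|=1+\delta$, and the discrepancy bound $C\sqrt{n\log(2\sqrt{\widetilde S_n/A_n})}$ are all right, and $\tfrac1n\log(\widetilde S_n/A_n)\to 0$ a.s.\ follows from Lemma \ref{lem3.1} exactly as you say. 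The reduction of part (ii) to ``$A_n>3\widetilde S_{n-1}$ infinitely often a.s.'' is also sound.

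The gap is in the final divergence claim $\sum_n L(3\widetilde S_{n-1})=\infty$ a.s. Potter's bounds give $L(tx)\ge C_\eps t^{-\eps}L(x)$, and here $t\asymp n$, so this route produces $L(3\widetilde S_{n-1})\ge C_\eps n^{-\eps}\min_{k<n}L(A_k)$; since $\min_{k<n}L(A_k)$ is of order $1/n$, the resulting lower bound $\sum_n n^{-1-\eps}$ \emph{converges}, and does not yield the claimed $c\sum_n 1/n$. The sharper comparison $L(3(n-1)x)\sim L(x)$ with $x=\max_{k<n}A_k$ does not follow from slow variation either: the uniform convergence theorem gives $L(tx)/L(x)\to1$ uniformly only for $t$ in compact sets, and for $t\to\infty$ with $x$ the ratio can tend to $0$ even when $x$ is super-polynomial in $t$ (take $L$ with representation remainder $\eta(u)\asymp 1/\log\log u$, so that $L(nx)/L(x)\approx n^{-\eta(x)}$ with $\eta(n^M)\log n\to\infty$). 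Separately, ``$\min_{k<n}L(A_k)$ is of order $1/n$'' is a distributional statement; to sum it you need the a.s.\ fact $\sum_n\min_{k\le n}U_k=\infty$ for i.i.d.\ uniforms, which is true (each record value contributes $\Theta(1)$ before the next record) but requires an argument. So as written, part (ii) is not proved.

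The good news is that your own machinery makes the hard step unnecessary. Since $\E[\log A_0]=\infty$ gives $\limsup_k A_k^{1/k}=\infty$ a.s., there are a.s.\ infinitely many $n$ with $A_n^{1/n}\ge\max_{1\le k<n}A_k^{1/k}$; as $A_k\ge1$ this forces $A_n\ge\max_{k<n}A_k$, hence $\widetilde S_n/A_n\le n$ along this subsequence. You do not need $\widetilde S_n/A_n$ bounded --- you only need $\log(\widetilde S_n/A_n)=o(n)$, and $\log n$ certainly qualifies: Jensen then gives $O(\log n)$ zeros outside $|z|\le 1+\delta$ and Erd\H{o}s--Tur\'an gives discrepancy $O(\sqrt{n\log n})$, both $o(n)$. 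This repairs part (ii) without the conditional Borel--Cantelli step, and in fact without using the slowly varying tail hypothesis at all; by contrast, the paper needs that hypothesis because its argument runs through Darling's theorem ($\widetilde S_n/M_n\to 1$ in probability) to verify the norm condition \eqref{4.7} of Theorem BSS.
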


\medskip
If the random coefficients satisfy mild assumptions such as in Theorem \ref{thm2.1}, then the zero counting measures of random polynomials $P_n(z) = \sum_{k=0}^n A_k B_k(z)$ converge almost surely to $\mu_E$ for very general sets $E$ and associated bases $\{B_k\}_{k=0}^\infty.$ We direct the reader to the recent papers \cite{BL}, \cite{Ba1,Ba2} and \cite{Pr2,Pr3}, and to references found therein. However, the necessity part of Theorem \ref{thm2.1} seems to be open in such general setting. We end this section with conjectures related to this matter. We assume here that $G$ is an arbitrary Jordan domain, and $E=\overline{G}.$

\begin{conjecture} \label{con2.4} It is possible to construct sets $E\subset\C$ with non-analytic boundary, and sequences of i.i.d. random variables $\{A_k\}_{k=0}^\infty$ with $\E[\log^+|A_0|]=\infty$, such that for each 
basis of Szeg\H{o}, Bergman, or Faber polynomials there is a subsequence of zero counting measures for $P_n(z) = \sum_{k=0}^n A_k B_k(z)$ satisfying $\tau_{n_k} \stackrel{w}{\rightarrow} \mu_E$ a.s. 
\end{conjecture}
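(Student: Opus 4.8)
\medskip
\noindent\textit{A line of attack on Conjecture~\ref{con2.4}.}
The plan is to transplant the mechanism behind Proposition~\ref{prop2.3}(ii) to the standard bases, letting a well-chosen non-analytic $E$ do the job that the summand $z^k-1$ does for the disk. Recall what the ``$-1$'' accomplishes: it forces $|P_n(0)|=\sum_{k=0}^n|A_k|$ to be comparable to $\max_{0\le k\le n}|A_k|$, so the zeros of $P_n$ cannot collapse toward an interior point even when the coefficients are heavy-tailed. When $\E[\log^+|A_0|]=\infty$ the sequence $(|A_k|)$ still has infinitely many record values $|A_{n_k}|=\max_{0\le j\le n_k}|A_j|=:R_{n_k}$ almost surely (R\'enyi), and along record times the single term $A_{n_k}B_{n_k}$ ought to govern $P_{n_k}(z)=\sum_{k=0}^{n_k}A_kB_k(z)$, the remaining $n_k$ terms summing in modulus to at most $R_{n_k-1}$ times a sub-exponential factor. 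For standard bases there is no ``$-1$'' to insert by hand, so this record-time dominance must be underpinned by a geometric feature of $E$.

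Concretely I would (i) choose the coefficient law with a sufficiently heavy slowly varying tail, e.g.\ $\P(|A_0|>x)\asymp 1/\log x$ for large $x$, so that $\E[\log^+|A_0|]=\infty$ while, almost surely, one can pass to a subsequence $\{n_k\}$ of record times with $\frac1{n_k}\log R_{n_k}\to\beta\in(0,\infty]$ and with the previous record exponentially smaller, $\frac1{n_k}\log R_{n_k-1}\le(1-\delta)\frac1{n_k}\log R_{n_k}$ for a fixed $\delta>0$ (a positive fraction of records has these properties); and (ii) choose $E=\overline G$ with non-analytic boundary $L$ — a polygon, or a $C^\infty$ but nowhere analytic Jordan curve — so arranged that, for \emph{each} of the Szeg\H{o}, Bergman and Faber bases $\{B_k\}$, the \emph{deterministic} zero counting measures converge to $\mu_E$; equivalently $\frac1k\log|B_k(z)|\to\log|\Phi(z)|$ on $\Omega$ and $\to0$ on $G$ (in $L^1_{\mathrm{loc}}$, and locally uniformly off the $o(k)$ zeros of $B_k$ that stray into $G$). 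Property (ii) is exactly what fails for the disk, the ellipse, and lemniscatic sets, where $B_k=c_kz^k+\cdots$ has \emph{all} its zeros pulled to an interior point; it should hold for non-analytic $L$ precisely because $\Phi$ then admits no analytic continuation into $G$ (at worst into a thin collar along $L$), leaving no interior region on which a continued $|\Phi|^k$ is small enough to attract the zeros. This ``no interior collapse'' is the structural surrogate for the $-1$: it makes $|B_{n_k}(z)|=e^{o(n_k)}$ on compacts of $G$.

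Granting (i) and (ii), the record-subsequence argument runs as follows. For $z\in\Omega$, $A_{n_k}B_{n_k}(z)$ dominates every earlier term because $|A_j|\le|A_{n_k}|$ and $|B_j(z)|^{1/j}\to|\Phi(z)|$. For $z$ on a compact $K\subset G$, $|A_{n_k}B_{n_k}(z)|=R_{n_k}e^{o(n_k)}$ by (ii), whereas $\sum_{j<n_k}|A_jB_j(z)|\le R_{n_k-1}\cdot O\!\bigl(n_k\max_{j<n_k}\|B_j\|_E\bigr)=R_{n_k-1}e^{O(\log n_k)}$, and the exponential record gap makes the latter $o\!\bigl(R_{n_k}e^{o(n_k)}\bigr)$. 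Hence $\frac1{n_k}\log|P_{n_k}(z)|-\frac1{n_k}\log|A_{n_k}|\to\log|\Phi(z)|$ on $\Omega$ and $\to0$ on $G$; since moreover $\|P_{n_k}\|_E^{1/n_k}\to e^{\beta}$ and the leading coefficient $A_{n_k}\lambda_{n_k}$ of $P_{n_k}$ has $\frac1{n_k}\log|A_{n_k}\lambda_{n_k}|\to\beta-\log\text{cap}(E)$, the logarithmic potentials of the zero counting measures $\tau_{n_k}$ converge off $L$ to the potential of $\mu_E$, while $\frac1{n_k}\log|P_{n_k}|$ stays below the Bernstein--Walsh majorant built from $\|P_{n_k}\|_E$ everywhere. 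Plugging this into the potential-theoretic descent that underlies Theorem~\ref{thm2.1} — no mass of $\tau_{n_k}$ escapes to infinity or concentrates in $G$, and every weak limit has the same logarithmic potential as $\mu_E$, hence equals it — yields $\tau_{n_k}\stackrel{w}{\rightarrow}\mu_E$ almost surely; along the way \eqref{2.1} holds on $\Omega$ along $\{n_k\}$. See also \cite{Pr2,Pr3} and \cite{BL} for the required asymptotic and equidistribution machinery.

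The hard part will be the deterministic input (ii): proving, simultaneously for the Szeg\H{o}, Bergman and Faber systems over one and the same non-analytic $E$, that their classical zeros equidistribute to $\mu_E$, with enough uniformity (and enough control of the stray interior zeros) to feed the estimate above. Each case is substantial on its own — the Bergman case over piecewise-analytic domains is within reach of the methods of Stylianopoulos and coauthors, and the Faber and Szeg\H{o} cases are classical in spirit — but assembling all three over a single non-analytic $E$, and ruling out anomalous attraction of zeros to a corner or to a ``ghost'' lemniscate buried in $G$, is the obstruction that keeps the statement a conjecture.
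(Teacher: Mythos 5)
The statement you are addressing is Conjecture~\ref{con2.4}: the paper gives no proof of it, so there is no argument of the authors to compare yours against. The only related material in the paper is Proposition~\ref{prop2.3}(ii) (record values of a slowly varying tail, normalization by the top coefficient, Theorem~BSS), whose mechanism you are indeed transplanting, and the necessity half of Theorem~\ref{thm2.1}, whose record-time dominance estimate you are running in reverse. Your outline is a natural and well-motivated line of attack, but it is a reduction to two unproved inputs rather than a proof, and you correctly flag the larger of the two yourself.

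The decisive gap is your deterministic input (ii). Every step of the record-time argument funnels through the claim that for a single non-analytic $E$ and for \emph{each} of the Szeg\H{o}, Bergman and Faber systems one has $\tfrac1k\log|B_k(z)|\to 0$ on compacts of $G$, i.e.\ that the basis polynomials' own zeros equidistribute to $\mu_E$ with only $o(k)$ strays inside $G$. That is precisely the part that is unknown in the generality you need: partial results exist for Bergman polynomials on piecewise-analytic domains (Saff, Stylianopoulos and coauthors) and for Faber polynomials of domains with corners, but no single non-analytic $E$ is currently known for which all three systems are proved to have this property — and your pointwise dominance estimate on $K\subset G$ additionally uses the lower bound $|B_{n_k}(z)|=e^{o(n_k)}$, which weak-$*$ equidistribution of the zeros of $B_k$ does not give near the stray interior zeros (it gives $\tfrac1k\log|B_k|\to0$ only in $L^1_{\mathrm{loc}}$; that does suffice for Bloom's criterion \eqref{4.9} via a supremum over a small disk, but the argument should be routed through that rather than through pointwise domination). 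A second, smaller gap is probabilistic: with $\P(|A_0|>x)\asymp 1/\log x$ one has $\limsup_n|A_n|^{1/n}=\infty$ a.s., so your $\beta$ is $+\infty$ along the natural record subsequence and the normalizations $\|P_{n_k}\|_E^{1/n_k}\to e^{\beta}$ and $|A_{n_k}\lambda_{n_k}|^{1/n_k}\to e^{\beta}/\mathrm{cap}(E)$ are meaningless as written; you must first divide by $A_{n_k}$, as in the proof of Proposition~\ref{prop2.3}, and verify \eqref{4.7} for $Q_{n_k}=P_{n_k}/A_{n_k}$, and the existence of records with the exponential gap to the previous record along a positive fraction of record times is asserted rather than proved. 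In short, the reduction to (i) and (ii) is a reasonable strategy, but (ii) is the actual content of the conjecture, and your write-up leaves it exactly as open as the authors do.
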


Our next conjecture is motivated by Proposition \ref{prop2.3}.

\begin{conjecture} \label{con2.5} It is possible to construct a weight function $w\ge 0$ on $\T$, and a sequence of i.i.d. random variables $\{A_k\}_{k=0}^\infty$ with $\E[\log^+|A_0|]=\infty$, such that for the basis of orthonormal on $\T$ polynomials with respect to this weight, we have a subsequence of zero counting measures for $P_n(z) = \sum_{k=0}^n A_k B_k(z)$ satisfying $\tau_{n_k} \stackrel{w}{\rightarrow} \mu_{\mathbb{T}}$ a.s.

An analogous construction should also exist for area orthonormal polynomials with respect to an appropriate weight function $w\ge 0$ on $\D.$
\end{conjecture}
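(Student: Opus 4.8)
The plan is to regard $P_n$ as an ordinary polynomial and to apply the classical Erd\H{o}s--Turán criterion for equidistribution of zeros near $\T$. Since the $k=0$ term of $P_n$ vanishes, writing $S_n:=\sum_{k=1}^n A_k$ we have $P_n(z)=\sum_{k=1}^n A_k z^k-S_n=:\sum_{k=0}^n a^{(n)}_k z^k$, so $a^{(n)}_0=-S_n$ and $a^{(n)}_k=A_k$ for $1\le k\le n$. Thus $P_n$ has degree $n$; its leading and constant coefficients $A_n\ge 1$ and $-S_n$ (with $S_n\ge n$) are nonzero, and the sum of the moduli of its coefficients is $2S_n$, so the Erd\H{o}s--Turán quantity of $P_n$ is
\begin{align*}
\ell_n:=\frac{\sum_{k=0}^n|a^{(n)}_k|}{\sqrt{|a^{(n)}_0 a^{(n)}_n|}}=\frac{2S_n}{\sqrt{S_n\,A_n}}=2\sqrt{\frac{S_n}{A_n}}.
\end{align*}
By the Erd\H{o}s--Turán theorem together with its companion estimate on the moduli of the zeros (see \cite{AB}) --- both of which depend only on the scale-invariant quantity $\ell_n$ --- it suffices for $\tau_n\stackrel{w}{\to}\mu_{\T}$ that $\tfrac1n\log\ell_n\to 0$.

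For (i), since $A_0\ge 1$ the hypothesis $\E[\log A_0]<\infty$ coincides with $\E[\log^+|A_0|]<\infty$, hence $\limsup_n A_n^{1/n}=1$ a.s.; because $A_n\ge 1$ this forces $\tfrac1n\log A_n\to 0$ a.s. Likewise $n\le S_n\le n\max_{1\le k\le n}A_k$, and $\limsup_k A_k^{1/k}=1$ a.s. yields $\tfrac1n\log\max_{1\le k\le n}A_k\to 0$, whence $\tfrac1n\log S_n\to 0$ a.s. Therefore $\tfrac1n\log\ell_n=\tfrac{\log 2}{n}+\tfrac1{2n}(\log S_n-\log A_n)\to 0$ a.s., which proves (i).

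For (ii) we can no longer control $\tfrac1n\log\ell_n$ for every $n$, but it is enough to produce, almost surely, a subsequence $n_k\uparrow\infty$ along which $\ell_{n_k}$ stays bounded; indeed $\ell_n\le 2\sqrt2$ as soon as $A_n\ge\sum_{k=1}^{n-1}A_k$, hence \emph{a fortiori} whenever $A_n\ge S_{n-1}$. So the proposition reduces to the claim that, if $A_0$ has a slowly varying tail, then almost surely $A_n\ge S_{n-1}$ for infinitely many $n$. To see this, let $L(x)=\P(A_0>x)$ and $\mathcal F_n=\sigma(A_0,\dots,A_n)$. Since $A_n$ is independent of $\mathcal F_{n-1}$ and $S_{n-1}$ is $\mathcal F_{n-1}$-measurable, $\P(A_n\ge S_{n-1}\mid\mathcal F_{n-1})\ge L(S_{n-1})$, so by the conditional Borel--Cantelli lemma of L\'evy it suffices to show $\sum_n L(S_{n-1})=\infty$ a.s. Let $M_{n-1}=\max_{1\le k\le n-1}A_k$, so $S_{n-1}\le(n-1)M_{n-1}$ and, $L$ being nonincreasing, $L(S_{n-1})\ge L((n-1)M_{n-1})$. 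Because $\E[\log A_0]=\infty$ together with slow variation forces $\log M_{n-1}/\log n\to\infty$ a.s. (the running maximum grows super-polynomially), the uniform convergence theorem for slowly varying functions gives $L((n-1)M_{n-1})=(1+o(1))L(M_{n-1})$ a.s. Finally, assuming for simplicity that $L$ is continuous and strictly decreasing (the general case follows by a routine perturbation), the variables $U_k:=L(A_k)$ are i.i.d. uniform on $(0,1)$ and $L(M_{n-1})=\min_{1\le k\le n-1}U_k$; decomposing the nonincreasing sequence $(\min_{k<n}U_k)_n$ along the record times of the running minimum of $(U_k)$, and applying the law of large numbers for martingale differences with bounded conditional variances, one checks $\sum_n\min_{1\le k\le n-1}U_k=\infty$ a.s. Combining the three estimates gives $\sum_n L(S_{n-1})=\infty$ a.s., proving the claim; applying the Erd\H{o}s--Turán criterion along the resulting subsequence $(n_k)$ yields (ii).

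The routine parts are the two limits used in (i) and the bookkeeping reducing (ii) to the claim. The substantive step is the claim, and within it the a.s.\ divergence $\sum_n\min_{k<n}U_k=\infty$ together with the slow-variation comparison $L(S_{n-1})\sim L(M_{n-1})$: the latter must be invoked precisely in the regime where $M_{n-1}$ is already enormous, and one must be careful to apply the conditional Borel--Cantelli lemma and the martingale strong law with respect to the correct filtration. A secondary point is to quote a form of the Erd\H{o}s--Turán theorem that yields full weak convergence $\tau_n\to\mu_{\T}$, i.e.\ radial clustering of the zeros and not merely equidistribution of their arguments.
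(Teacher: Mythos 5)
The statement you were asked about is Conjecture \ref{con2.5}, which the paper leaves open: it asks you to \emph{construct a weight} $w\ge 0$ on $\T$ so that the basis $\{B_k\}$ consisting of the \emph{orthonormal polynomials with respect to $w$} exhibits the subsequence phenomenon, even though $\E[\log^+|A_0|]=\infty$. Your argument never produces a weight and never touches orthonormal polynomials; instead you work throughout with the basis $B_k(z)=z^k-1$, i.e.\ you are (re)proving Proposition \ref{prop2.3}. That proposition is precisely the \emph{motivation} for the conjecture, and the gap between the two is the entire content of the conjecture: $\{z^k-1\}$ is not an orthonormal system for any weight on $\T$ (indeed $B_0\equiv 0$, so it is not even a polynomial basis in the sense the paper uses), and the open problem is whether one can realize a comparable cancellation structure inside a genuine family of orthonormal polynomials. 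So the proposal, whatever its merits, does not address the statement; no comparison with a proof in the paper is possible because the paper offers none.

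Viewed instead as an alternative proof of Proposition \ref{prop2.3} via Erd\H{o}s--Tur\'an rather than Theorem BSS, your part (i) is fine, but part (ii) has a soft spot. You invoke ``the uniform convergence theorem for slowly varying functions'' to get $L((n-1)M_{n-1})=(1+o(1))L(M_{n-1})$; that theorem gives $L(tx)/L(x)\to 1$ uniformly only for $t$ in \emph{compact} subsets of $(0,\infty)$, whereas here $t=n-1\to\infty$ together with $x=M_{n-1}$. Via the Karamata representation $L(x)=c(x)\exp\bigl(\int_a^x \eps(u)\,du/u\bigr)$ one gets $|\log\bigl(L(tx)/L(x)\bigr)|\le o(1)+\sup_{u\ge x}|\eps(u)|\cdot\log t$, so you would need $\sup_{u\ge M_{n-1}}|\eps(u)|\cdot\log n\to 0$; knowing only that $\log M_{n-1}/\log n\to\infty$ does not force this, since $\eps$ may tend to $0$ arbitrarily slowly. (For your purposes a two-sided constant bound $L(nM_{n-1})\ge c\,L(M_{n-1})$ would suffice for divergence of the series, but even that needs an argument tailored to how fast $M_n$ grows relative to the decay of $\eps$.) Note that the paper's route for Proposition \ref{prop2.3}(ii) avoids this entirely: it needs only that $A_n=M_n$ along the record-type subsequence where $A_n^{1/n}\ge\max_{k<n}A_k^{1/k}$, plus Darling's theorem that $S_n/M_n\to 1$ in probability, and then extracts a further a.s.\ convergent subsequence --- a strictly weaker and more robust claim than your ``$A_n\ge S_{n-1}$ infinitely often a.s.''
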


\section{Proofs} \label{secP}

We start with auxiliary results on random coefficients. The first lemma is well known, but we prove it for the convenience of the reader. 

\begin{lemma}\label{lem3.1}
If $\{A_k\}_{k=0}^{\infty}$ are non-trivial, independent and identically distributed complex random variables that satisfy $\E[\log^+|A_0|]<\infty$, then
\begin{align} \label{3.1}
\limsup_{n\to\infty} |A_n|^{1/n}= 1 \quad\mbox{ a.s.},
\end{align}
and
\begin{align} \label{3.2}
\limsup_{n\to\infty} \left(\max_{0\le k\le n} |A_k| \right)^{1/n} = 1 \quad\mbox{ a.s.}
\end{align}
\end{lemma}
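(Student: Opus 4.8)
The plan is to prove the two statements separately, noting that \eqref{3.2} follows almost immediately from \eqref{3.1} once we know the $\limsup$ in \eqref{3.1} is exactly $1$ rather than merely $\le 1$. The core of the argument is a Borel--Cantelli analysis applied to the independent events $\{|A_n| > \varepsilon^n\}$ and $\{|A_n| > \varepsilon^{-n}\}$ for small $\varepsilon > 0$.

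First I would establish the upper bound $\limsup_n |A_n|^{1/n} \le 1$ a.s. Fix $\varepsilon \in (0,1)$ and consider the events $E_n = \{|A_n| > (1+\varepsilon)^n\}$. Since the $A_k$ are identically distributed, $\sum_n \P(E_n) = \sum_n \P(|A_0|^{1/n} > 1+\varepsilon) = \sum_n \P(\log^+|A_0| > n\log(1+\varepsilon))$, and this sum is comparable to $\E[\log^+|A_0|]/\log(1+\varepsilon) < \infty$ by the standard ``integral test'' bound $\sum_{n\ge 1}\P(Y > cn) \le \E[Y]/c$ for a nonnegative random variable $Y$. By the first Borel--Cantelli lemma, only finitely many $E_n$ occur a.s., so $|A_n| \le (1+\varepsilon)^n$ eventually, giving $\limsup_n |A_n|^{1/n} \le 1+\varepsilon$ a.s. Intersecting over a sequence $\varepsilon \to 0^+$ yields $\limsup_n |A_n|^{1/n} \le 1$ a.s.

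Next I would establish the matching lower bound $\limsup_n |A_n|^{1/n} \ge 1$ a.s., which is where non-triviality enters. Since $A_0$ is non-trivial, $\P(|A_0| > 0) = p > 0$, so there is some $\delta > 0$ with $\P(|A_0| > \delta) =: q > 0$. Fix any $\eta \in (0,1)$ and consider the events $F_n = \{|A_n| > \eta^n\}$. For $n$ large enough that $\eta^n < \delta$ we have $\P(F_n) \ge q$, so $\sum_n \P(F_n) = \infty$. By independence and the second Borel--Cantelli lemma, infinitely many $F_n$ occur a.s., hence $\limsup_n |A_n|^{1/n} \ge \eta$ a.s.; letting $\eta \to 1^-$ through a sequence gives $\limsup_n |A_n|^{1/n} \ge 1$ a.s. Combining the two bounds proves \eqref{3.1}.

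Finally, \eqref{3.2} is essentially free: since $\max_{0\le k\le n}|A_k| \ge |A_n|$, we get $\limsup_n (\max_{0\le k\le n}|A_k|)^{1/n} \ge \limsup_n |A_n|^{1/n} = 1$ a.s. For the reverse inequality, on the event that $|A_n| \le (1+\varepsilon)^n$ for all $n \ge N$ (which has probability $1$ up to the choice of $\varepsilon$, as shown above), write $\max_{0\le k \le n}|A_k| \le \max(\max_{0\le k < N}|A_k|,\, (1+\varepsilon)^n)$; the first term is a fixed finite constant, so its $n$-th root tends to $1$, and thus $\limsup_n (\max_{0\le k\le n}|A_k|)^{1/n} \le 1+\varepsilon$ a.s. Letting $\varepsilon \to 0^+$ completes the proof. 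I do not anticipate a genuine obstacle here; the only point requiring a little care is the two-sided comparison $\E[\log^+|A_0|] \asymp \sum_n \P(\log^+|A_0| > cn)$, which is the standard fact that a nonnegative random variable is integrable iff $\sum_n \P(Y > cn) < \infty$ for one (equivalently every) $c > 0$.
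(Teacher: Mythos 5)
Your proposal is correct and follows essentially the same route as the paper: first Borel--Cantelli applied to $\{|A_n|\ge e^{\eps n}\}$ (equivalently $(1+\eps)^n$) together with the tail-sum bound for $\E[\log^+|A_0|]$ gives the upper bound, and the second Borel--Cantelli lemma with non-triviality gives the matching lower bound, after which \eqref{3.2} follows by the elementary maximum argument you spell out. The only cosmetic difference is that the paper uses a fixed threshold $c$ (so that $c^{1/n}\to 1$ directly) where you use $\eta^n$ and let $\eta\to 1^-$; both are fine.
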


This follows from the Borel-Cantelli Lemmas stated below (see, e.g., \cite[p. 96]{Gut}) in a standard way.

\medskip
\noindent\textbf{Borel-Cantelli Lemmas.} \textit{Let $\{\cE_n\}_{n=1}^{\infty}$ be a sequence of arbitrary events.}\\
(i) \textit{If $\sum_{n=1}^{\infty} \P(\cE_n) < \infty$ then $\P(\cE_n \mbox{ occurs infinitely often})=0.$}\\
(ii) \textit{If events $\{\cE_n\}_{n=1}^{\infty}$ are independent and $\sum_{n=1}^{\infty} \P(\cE_n) = \infty$, then $\P(\cE_n \textup{ i.o.})=1.$}

\begin{proof}[Proof of Lemma \ref{lem3.1}]
For any fixed $\eps>0$, define events $\cE_n=\{|A_n| \ge e^{\eps n}\}, \ n\in\N.$ Then
\begin{align*}
\sum_{n=1}^{\infty} \P(\cE_n) &= \sum_{n=1}^{\infty} \P(\{\log^+|A_n| \ge \eps n\}) = \sum_{n=1}^{\infty} \P\left(\left\{\frac{1}{\eps}\log^+|A_0| \ge n\right\}\right) \\ &\le \frac{1}{\eps} \E[\log^+|A_0|]<\infty.
\end{align*}
Hence $\P(\cE_n \mbox{ occurs infinitely often})=0$ by the first Borel-Cantelli Lemma, so that the complementary event $\cE_n^c$ must happen for all large $n$ with probability 1. This means that $|A_n|^{1/n} \le e^{\eps}$ for all sufficiently large $n\in\N$ almost surely.  We obtain that
\[
\limsup_{n\to\infty} |A_n|^{1/n} \le e^{\eps} \quad\mbox{a.s.},
\]
and since $\eps>0$ was arbitary, this shows that 
\begin{equation}\label{limsup}
\limsup_{n\to\infty} |A_n|^{1/n}\leq 1 \quad\mbox{ a.s.}
\end{equation} 

On the other hand, $A_0$ is non-trivial, so there exists $c>0$ such that $\P(|A_0| > c) >0.$ Therefore since $A_n$ are i.i.d,
\begin{equation}
\sum_{n=1}^\infty \mathbb{P}(|A_n|> c) = \infty.
\end{equation}
Using the second Borel-Cantelli Lemma, it follows that with probability one, there exist infinitely many $n$ such that $|A_n| > c.$ Combining this with  \eqref{limsup}, we obtain \eqref{3.1}. An elementary argument shows that \eqref{3.2} is a consequence of \eqref{3.1}.
\end{proof}

\begin{lemma}\label{lem3.2}
If $\{A_k\}_{k=0}^{\infty}$ are non-trivial i.i.d. complex random variables, then
there is $b>0$ such that
\begin{align} \label{3.3}
\liminf_{n\to\infty} \left(\max_{n-b\log{n}<k\le n} |A_k|\right)^{1/n} \ge 1 \quad\mbox{ a.s.}
\end{align}
\end{lemma}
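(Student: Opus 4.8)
The plan is to apply the first Borel--Cantelli Lemma, in the spirit of the proof of Lemma~\ref{lem3.1}, but now exploiting that the window $(n-b\log n,n]$ contains on the order of $b\log n$ mutually independent coefficients. The probability that \emph{all} of these coefficients are smaller than $(1-\delta)^n$ is then at most $q^{m_n}$, where $q<1$ comes from non-triviality of $A_0$ and $m_n$ is the number of indices in the window, which is of order $b\log n$; this bound is a polynomial in $1/n$ whose degree grows with $b$, so choosing $b$ large enough makes the relevant series summable.

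First I would extract a scale from non-triviality: since $A_0$ is non-trivial, $\P(|A_0|>0)>0$, so there is $t_0>0$ with $\P(|A_0|\ge t_0)>0$; put $q:=\P(|A_0|<t_0)<1$ (if $q=0$ then $|A_k|\ge t_0$ a.s.\ for all $k$ and the assertion is immediate, so we may assume $q\in(0,1)$). As $t\mapsto\P(|A_0|<t)$ is non-decreasing, $\P(|A_0|<t)\le q$ for every $t\le t_0$. Now fix $\delta\in(0,1)$, let $m_n:=\#\{k\in\Z:\,n-b\log n<k\le n\}\ge b\log n$, and set
\[
\cE_n=\Big\{\max_{n-b\log n<k\le n}|A_k|<(1-\delta)^n\Big\},\qquad n\in\N.
\]
For all sufficiently large $n$ we have $(1-\delta)^n\le t_0$, so independence of the $A_k$ gives, for such $n$,
\[
\P(\cE_n)=\prod_{n-b\log n<k\le n}\P\big(|A_k|<(1-\delta)^n\big)\le q^{m_n}\le q^{\,b\log n}=n^{\,b\log q}.
\]
Choosing $b:=2/\log(1/q)$, which depends only on the law of $A_0$ and not on $\delta$, yields $\P(\cE_n)\le n^{-2}$ for all large $n$, hence $\sum_n\P(\cE_n)<\infty$. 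By the first Borel--Cantelli Lemma, almost surely $\cE_n$ occurs for only finitely many $n$, so that $\max_{n-b\log n<k\le n}|A_k|\ge(1-\delta)^n$ for all large $n$, and therefore $\liminf_{n\to\infty}\big(\max_{n-b\log n<k\le n}|A_k|\big)^{1/n}\ge 1-\delta$ almost surely.

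Finally I would let $\delta$ range over $\{1/j:j\in\N\}$ and intersect the corresponding probability-one events; since the single value of $b$ above works simultaneously for all these $\delta$, on the intersection the $\liminf$ is at least $1-1/j$ for every $j$, which gives \eqref{3.3}. I do not expect a genuine obstacle here: the whole argument is a clean first Borel--Cantelli estimate. The only points requiring a little care are isolating the threshold $t_0$ and the constant $q<1$ from non-triviality of $A_0$ alone, checking that the window truly contains at least $b\log n$ integers so that $q^{m_n}\le n^{b\log q}$, and the routine passage from a fixed $\delta$ to all $\delta>0$.
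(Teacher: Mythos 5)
Your proof is correct and follows essentially the same route as the paper's: both bound $\P\bigl(\max_{n-b\log n<k\le n}|A_k|\le\lambda^n\bigr)$ by $q^{\,b\log n}=n^{b\log q}$ using independence and a threshold $q<1$ extracted from non-triviality, then choose $b$ so that the series is summable and apply the first Borel--Cantelli Lemma. The only cosmetic difference is that the paper works with a general window length $\alpha_n$ and specializes at the end, while you fix $b=2/\log(1/q)$ from the start and spell out the intersection over $\delta=1/j$.
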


\begin{proof}
We use a modified idea of Fern\'andez \cite{Fe} in this proof. Let $\alpha_n\le n,\ n\in\N,$ be a sequence of natural numbers that will be specified later. Consider 
\begin{align*}
M_n:=\max_{n-\alpha_n<k\le n} |A_k|.
\end{align*}
The statement 
\begin{align*}
\liminf_{n\to\infty} \left(M_n\right)^{1/n} \ge 1 \quad\mbox{ a.s.}
\end{align*}
is equivalent to 
\begin{align*}
\P(\{M_n \le \lambda^n \mbox{ i.o.}\}) = 0
\end{align*}
for all positive $\lambda<1.$ The latter would follow from the first Borel-Cantelli Lemma if we show that 
\begin{align*}
\sum_{n=1}^\infty \P(\{M_n \le \lambda^n\}) < \infty
\end{align*}
for all positive $\lambda<1.$
Since our variables $\{A_k\}_{k=0}^{\infty}$ are i.i.d., we have
\begin{align*}
\P(\{M_n \le \lambda^n\}) = \P(\{|A_0| \le \lambda^n\})^{\alpha_n}.
\end{align*}
As $A_0$ is non-trivial, we can find $c>0$ and $p\in(0,1)$ such that $\P(\{|A_0| \le c\})\le p$. Hence for any $\lambda<1$ there is $N=N(p,\lambda)\in\N$ such that $\P(\{|A_0| \le \lambda^n\})\le p$ for all $n\ge N.$ This gives 
\begin{align*}
\sum_{n=1}^\infty \P(\{M_n \le \lambda^n\}) \le \sum_{n=1}^\infty p^{\alpha_n} < \infty,
\end{align*}
provided $p^{\alpha_n} \le 1/n^2$ for large $n$. It suffices to take $\alpha_n \ge (-2/\log{p})\log{n}$ to satisfy the latter condition. 
\end{proof}

\medskip
We use the following result of Grothmann \cite{Gr} on the distribution of zeros of polynomials. More details and applications of this result may be found in \cite{AB}.
Let $E\subset\C$ be a compact set of positive capacity such that $\Omega=\overline{\C} \setminus E$ is connected and regular. The Green function of $\Omega$ with pole at $\infty$ is denoted by $g_\Omega(z,\infty)$. We use $\|\cdot\|_K$ for the supremum norm on a compact set $K$.

\noindent
{\bf Theorem G.} {\em If a sequence of polynomials $P_n(z),\ \deg(P_n)\le n\in\N,$ satisfies
\begin{align} \label{3.4}
\limsup_{n\to\infty} \|P_n\|_E^{1/n} \le 1,
\end{align}
for any closed set $K\subset E^\circ$
\begin{align} \label{3.5}
\lim_{n\to\infty} \tau_n(K) = 0,
\end{align}
and there is a compact set $S\subset\Omega$ such that 
\begin{align} \label{3.6}
\liminf_{n\to\infty} \max_{z\in S} \left(\frac{1}{n} \log|P_n(z)| - g_\Omega(z,\infty) \right) \ge 0,
\end{align}
then the zero counting measures $\tau_n$ of $P_n$ converge weakly to $\mu_E$ as $n\to\infty.$}

We first give a short proof of the Ibragimov-Zaporozhets equidistribution criterion for the zeros of partial sums $P_n(z)=\sum_{k=0}^{n} A_k z^k$ of a random power series with non-trivial i.i.d. complex random coefficients $\{A_n\}_{n=0}^\infty$. If $\E[\log^+|A_0|]<\infty$ then the radius of convergence of our random series is almost surely 1 by Lemma \ref{lem3.1}. On this event, the series converges almost surely in the unit disk to a not identically zero analytic function, so that \eqref{3.5} holds for any compact $K\subset\D$ by Hurwitz's theorem. Moreover, \eqref{3.4} is also satisfied almost surely for $E=\overline{\D}$ by \eqref{3.2} and the estimate $\|P_n\|_\T \le (n+1) \max_{0\le k\le n} |A_k|.$ If $\tau_n$ do not converge to $\mu_\T$, then \eqref{3.6} does not hold with probability one for any compact set  $S$ in $\Delta=\overline{\C}\setminus\overline{\D}.$ In particular, for $S=\{|z|=R,\ R>1\}$ and a subsequence $n_m,\ m\in\N,$ we have with positive probability that
\[
\limsup_{m\to\infty} \|P_{n_m}\|_S^{1/n_m} < R,
\]
as $g_\Delta(z,\infty)=\log|z|.$ Hence, with positive probability,
\[
|A_n| \le \frac{1}{2\pi} \int_{|z|=R} \frac{|P_{n_m}(z)|\,|dz|}{R^{n+1}} \le q^n, \quad c n_m \le n\le n_m.
\]
for some $c,q\in(0,1)$. Existence of such Hadamard-Ostrowski gaps contradicts \eqref{3.3} of Lemma \ref{lem3.2}. We conclude that $\tau_n$ must converge to $\mu_\T$ with probability one. If $\E[\log^+|A_0|]=\infty$ then the radius of convergence of the random power series is 0 almost surely. On this event, Theorem XVIII of Rosenbloom \cite[pp. 40--41]{Ro} gives a subsequence of $P_n$ whose zeros are equidistributed near the circles $|z|=|A_n|^{-1/n}.$ In particular, all zeros of this subsequence tend to the origin as $n\to\infty.$

\begin{proof}[Proof of Theorem \ref{thm2.1} and of Corollary \ref{cor2.2}]
We have that $E$ is the closure of a Jordan domain $G$ bounded by an analytic curve $L$ with exterior $\Omega.$ It is well known that the conformal mapping $\Phi:\Omega\to\Delta,\ \Phi(\infty)=\infty,\ \Phi'(\infty)>0,$ extends through $L$ into $G$, so that $\Phi$ maps a domain $\Omega_r$ containing $\overline{\Omega}$ conformally onto $\{|z|>r\}$ for some $r\in(0,1).$ In particular, the level curves of $\Phi$ denoted by $L_\rho$ are contained in $G$ for all $\rho\in(r,1)$, $L_1=L$ and $L_\rho\subset\Omega$ for $\rho>1.$ Since $g_\Omega(z,\infty)=\log|\Phi(z)|$, $L_\rho$ are also the level curves of the Green function. It is also known that in all three cases of polynomial bases we consider in this theorem, we have that 
\begin{align} \label{3.7}
\lim_{n\to\infty} |B_n(z)|^{1/n} = |\Phi(z)|
\end{align}
holds uniformly on compact subsets of $\Omega_r.$ Hence for any compact set $K\subset G$, we have (cf. \cite[pp. 290 and 338]{SL} and \cite[Section 2.3]{Su}) that 
\[
\limsup_{n\to\infty} \|B_n\|_K^{1/n} < 1.
\] 
If the assumption $\E[\log^+|A_0|]<\infty$ is satisfied, then the random series must converge on compact subsets of $G$ almost surely by \eqref{3.1} of Lemma \ref{lem3.1}. Furthermore, its limit is (almost surely) an analytic function that cannot vanish identically because of \eqref{3.1} and uniqueness of series expansions in Szeg\H{o}, Bergman and Faber polynomials (see \cite[pp. 293 and 340]{SL} and Section 6.3 of \cite{Su} for these facts). This proves the corresponding part of Corollary \ref{cor2.2}. We also conclude that \eqref{3.5} holds for any compact $K\subset G$ by Hurwitz's theorem. Note that \eqref{3.4} is also satisfied for $E$ almost surely by \eqref{3.2}, \eqref{3.7} and the estimate 
\[
\|P_n\|_E \le \sum_{k=0}^n |A_k| \|B_k\|_E \le (n+1) \max_{0\le k\le n} |A_k| \, \max_{0\le k\le n} \|B_k\|_E,
\]
as
\[
\limsup_{n\to\infty} \left(\max_{0\le k\le n} \|B_k\|_E\right)^{1/n} \le 1.
\]
If $\tau_n$ do not converge to $\mu_E$ a.s., then \eqref{3.6} cannot hold a.s. for any compact set  $S$ in $\Omega.$ We choose $S=L_R,$ with $R>1$, and find a subsequence $n_m,\ m\in\N,$ such that
\begin{align} \label{3.8}
\limsup_{m\to\infty} \|P_{n_m}\|_{L_R}^{1/n_m} < R,
\end{align}
holds with positive probability. Note that all zeros of $B_n$ are contained outside $\Omega_r$ and hence inside $L_R$ for all large $n$ by \eqref{3.7}. This allows us to write an integral representation
\begin{align} \label{3.9}
A_n = \frac{1}{2\pi i} \int_{L_R} \frac{P_n(z)\,dz}{z B_n(z)},
\end{align}
which is valid for all large $n\in\N$ because $P_n(z)/(z B_n(z)) = A_n/z + O(1/z^2)$ for $z\to\infty.$ We now need more precise estimates for $B_n$ that follow from known asymptotic relations in $\Omega_r$. In the case of Bergman polynomials, Carleman's asymptotic \cite[p. 12]{Ga} implies that there are positive constants $c_1$ and $c_2$ that do not depend on $n$ and $z$, such that
\begin{align} \label{3.10}
c_2 \sqrt{n}\, \rho^n \le |B_n(z)| \le c_1 \sqrt{n}\, \rho^n, \quad z\in L_{\rho},\ \rho>r,\ n\in\N.
\end{align}
Similar estimates, but without the factor $\sqrt{n}$ on both sides are true for Szeg\H{o} and Faber polynomials (see \cite{SL} and \cite{Su}), so that the following proof remains essentially the same for those bases. We estimate from \eqref{3.9} and \eqref{3.10} with $\rho=R$ that
\[
|A_n| \le \frac{|L_R|}{2\pi d} \frac{\|P_n\|_{L_R}}{c_2 \sqrt{n}\, R^n},
\]
where $|L_R|$ is the length of $L_R$ and $d:=\min_{z\in L_R} |z|.$ It follows that
\[
\|P_{n-1}\|_{L_R} \le \|P_n\|_{L_R} + |A_n| \|B_n\|_{L_R} \le \|P_n\|_{L_R} \left(1 + \frac{|L_R|}{2\pi d} \frac{c_1}{c_2} \right) =: C\, \|P_n\|_{L_R}, \quad n\in\N.
\]
Applying this estimate repeatedly, we obtain that
\[
\|P_{n-k}\|_{L_R} \le  C^k\, \|P_n\|_{L_R}, \quad k\le n,
\]
so that \eqref{3.9} yields 
\[
|A_{n-k}| \le \frac{|L_R|}{2\pi d} \frac{\|P_{n-k}\|_{L_R}}{c_2 \sqrt{n-k}\, R^{n-k}} \le \frac{|L_R|}{2\pi d} \frac{C^k\, \|P_n\|_{L_R}}{c_2 \sqrt{n-k}\, R^{n-k}}.
\]
Choosing sufficiently small $\eps>0$ and using \eqref{3.8}, we deduce from previous inequality that
\[
|A_{n_m-k}| \le q^{n_m}, \quad 0\le k\le \eps n_m,
\]
for some $q\in(0,1)$ and all sufficiently large $n_m$, with positive probability. The latter estimate clearly contradicts \eqref{3.3} of Lemma \ref{lem3.2}. Hence \eqref{3.6} holds for $S=L_R,$ with any $R>1$, and $\tau_n$ converge weakly to $\mu_E$ with probability one. Note that \eqref{3.6} for $S=L_R,$ with $R>1$, is equivalent to \eqref{2.1}. Indeed, we have equality in \eqref{3.6}, with $\lim$ instead of $\liminf$, by Bernstein-Walsh inequality and \eqref{3.4}, see Remark 1.2 of \cite[p. 51]{AB} for more details. This concludes the proof of sufficiency part for Theorem \ref{thm2.1} as well as the proof of Corollary \ref{cor2.2}.

Now we prove the necessity part of Theorem \ref{thm2.1}. Let us assume that $\E[\log^+|A_0|] =\infty.$ Recall the bounds of \eqref{3.10} for Bergman polynomials, and recall that similar bounds without the factor $\sqrt{n}$ are true for the Szeg\H{o} and the Faber polynomial bases. It follows that for $n$ large enough, $B_n$ has all its zeros lying inside $L_{\rho}.$ We show below that for infinitely many $n,$ the zeros of $P_n$ all lie inside $L_{\rho}$ where $\rho\in (r, 1).$ Hence the counting measures $\tau_n$ do not converge to $\mu_E.$
  
For the proof, we follow a technique from \cite{IZ}. Let us fix $\rho,$ where $r < \rho < 1.$ To begin with, we note that since $A_k$ are i.i.d. random variables satisfying $\E[\log^+|A_0|]= \infty,$ an application of the Borel-Cantelli lemma gives us that 
$$\limsup_{k\to\infty} |A_k|^{1/k} = \infty \quad\mbox{a.s.}\hspace{0.05in}$$
This means that with probability one, for infinitely many values of $n,$ all three of the following estimates hold:

\begin{equation}\label{E1}
|A_n|^{1/n}\geq \max_{1\leq k\leq n-1}|A_k|^{1/k},
\end{equation}

\begin{equation}\label{E2}
 |A_n|^{1/n}\geq\dfrac{c_2 + 2c_1}{c_2\rho},
\end{equation}

\begin{equation}\label{E3}
|A_0| < \frac{\sqrt{n}}{2}\dfrac{c_2}{c_1}\left(\dfrac{c_2 + 2c_1}{c_2}\right)^n.
\end{equation}

\noindent Fix a large $n$ for which \eqref{E1}, \eqref{E2} and \eqref{E3} hold. Let $z$ be in $L_{\rho}$ and $1\leq k\leq n-1.$ Using inequality \eqref{3.10} and the bound \eqref{E1}, we estimate

\begin{eqnarray}\label{basic}
|A_kB_k(z)|& \le & |A_k|c_1\sqrt{k}\rho^k \nonumber\\
          & \le & c_1\sqrt{n}|A_n|^{k/n}\rho^k \nonumber\\
          & = & c_1\sqrt{n} |A_n\rho^n|^{k/n}.
\end{eqnarray}
Furthermore, \eqref{basic} and \eqref{E3} imply that with probability one,

\begin{align*}\label{chain}
&|A_0B_0 + A_1B_1(z) +...+ A_{n-1}B_{n-1}(z)| \leq |A_0B_0| + ...+ |A_{n-1}B_{n-1}(z)| \\
&\leq \frac{\sqrt{n}}{2} c_2 \left(\frac{c_2 + 2c_1}{c_2}\right)^n + c_1\sqrt{n}|A_n\rho^n|^{\frac{1}{n}} + c_1\sqrt{n}|A_n\rho^n|^{\frac{2}{n}}+...+ c_1\sqrt{n}|A_n\rho^n|^{\frac{n-1}{n}} \\
&\leq \frac{\sqrt{n}}{2} c_2 \left(\frac{c_2 + 2c_1}{c_2}\right)^n - c_1\sqrt{n} + c_1\sqrt{n}\left(\dfrac{|A_n\rho^n| - 1}{|A_n\rho^n|^{\frac{1}{n}}- 1}\right) \\
&\leq \frac{\sqrt{n}}{2} c_2|A_n\rho^n| + c_1\sqrt{n}\left(\dfrac{|A_n\rho^n| - 1}{\frac{c_2+2c_1}{c_2\rho}\rho - 1}\right) - c_1\sqrt{n}\\
&< \frac{\sqrt{n}}{2} c_2|A_n\rho^n| + \frac{c_2\sqrt{n}}{2}(|A_n\rho^n| - 1) \\
&< c_2\sqrt{n} |A_n\rho^n|\leq |A_nB_n(z)|.
\end{align*} 
We remark that in the above chain of inequalities, we used \eqref{E2} to go from the the fourth line to the fifth line and \eqref{3.10} in the last line. Having established the estimate $|A_0B_0 + A_1B_1(z) +...+ A_{n-1}B_{n-1}(z)|< |A_nB_n(z)|$ on $L_{\rho}$, Rouche's theorem now implies that $P_n$ has the same number of zeros as $B_n$ inside $L_{\rho}.$ In other words, all the zeros of $P_n$ are inside $L_{\rho}.$ Since this is true for infinitely many $n,$ and remembering that $\mu_E$ is supported on $L_1 = \partial E,$ we conclude that with probability one, the measures $\tau_n$ do not converge to $\mu_E.$ 

\end{proof}

\begin{proof}[Proof of Proposition \ref{prop2.3}]
\textup{(i)} Since $\E[\log^+|A_0|]< \infty,$ Lemma \ref{3.1} gives that $\limsup_{n\to\infty} |A_n|^{1/n} = 1$ a.s. Coupled with our assumption that $A_n\geq 1$ this gives

\begin{equation}\label{limit}
\lim_{n\to\infty} |A_n|^{1/n} = 1 \hspace{0.1in}\mbox{ a.s.}\hspace{0.1in}
\end{equation}
Next, note that  $P_n(z) = A_nz^n + A_{n-1}z^{n-1}+...+ A_1z -(A_1 + A_2 +... + A_n)$. Dividing out by $A_n$, we have that the zeros of $P_n$ coincide with those of $$Q_n(z) = z^n + \dfrac{A_{n-1}}{A_n}z^{n-1} + ... + \dfrac{A_1}{A_n}z - \dfrac{A_1 + A_2 +... + A_n}{A_n}.$$ 
We now use the following theorem from \cite{BSS} to study the limiting behavior of the zeros of $Q_n.$

\vspace{0.1in}

\noindent 

\textbf{Theorem BSS.} {\em Let $E\subset\C$ be a compact set, $\textup{cap}(E)>0$.  If a sequence of polynomials $P_n(z) = \sum_{k=0}^n c_{k,n} z^k$ satisfy
\begin{align} \label{4.7}
\limsup_{n\to\infty} \|P_n\|_E^{1/n} \le 1 \quad \mbox{and} \quad \lim_{n \rightarrow \infty} |c_{n,n}|^{1/n}=1/\textup{cap}(E),
\end{align}
and for any closed set $A$ in the bounded components of $\C\setminus \textup{supp}\,\mu_E$ we have
\begin{align} \label{4.8}
\lim_{n\to\infty} \tau_n(A) = 0,
\end{align}
then the zero counting measures $\tau_n$ converge weakly to $\mu_E$ as $n\to\infty.$}

It is known that \eqref{4.8} holds if every bounded component of $\C\setminus \textup{supp}\,\mu_E$ contains a compact set $K$ such that 
\begin{align} \label{4.9}
\liminf_{n\to\infty} \|P_n\|_K^{1/n} \ge 1,
\end{align}
see Bloom \cite[p. 1706]{Bl1}.

Continuing with our proof, we observe that $$\|Q_n\|_{\mathbb {T}} \leq\dfrac{2 (A_1 + A_2 + .. + A_n)}{A_n}\leq\dfrac{2n\max_{1\leq i\leq n}{A_i}}{A_n}.$$ 
Using \eqref{3.2} and \eqref{limit}, we obtain $\limsup_{n\to\infty}\|Q_n\|_{\mathbb {T}}^{1/n}\le 1$ a.s. Since $Q_n$ is monic, the condition on the leading coefficient is trivially satisfied and hence \eqref{4.7} holds a.s. in our case. All that remains to check is \eqref{4.8}, which in turn will follow from \eqref{4.9}.  By taking $K =\{0\},$ and using that our random variables are positive, we have 
$$|Q_n(0)|^{1/n} = \left(\dfrac{A_1 + A_2 + .. + A_n}{A_n}\right)^{1/n}\geq 1\hspace{0.05in}\mbox{a.s.}\hspace{0.05in}$$ 
Hence \eqref{4.9} holds a.s., and Theorem BSS now yields that $\tau_n \stackrel{w}{\rightarrow}\mu_{\mathbb{T}}$ a.s.

\textup{(ii)} Let $M_n = \max_{1\leq i\leq n} A_i,$ then since $A_0$ has a slowly varying tail, c.f. \cite{D},
\begin{equation}\label{slowv}
\dfrac{A_1 + A_2 +...+ A_n}{M_n}\hspace{0.1in}\xrightarrow[]{P}\hspace{0.05in} 1.
\end{equation}
We also have that $\E[\log^+|A_0|] = \infty,$ which implies that $\limsup_{k\to\infty} A_k^{1/k} = \infty$ a.s. As before, it follows that for some subsequence $\mathcal{N},$
\begin{equation}
A_n^{1/n}\geq \max_{1\leq k\leq n-1}A_k^{1/k},\quad n\in\mathcal{N}.
\end{equation}
Since $A_k\geq 1$ a.s., this means that along this subsequence $A_n = M_n.$ Now using \eqref{slowv}, and choosing a further subsequence which we continue to call $\mathcal{N},$ we have that along the subsequence $\mathcal{N}$ 
\begin{equation}\label{converge}
\dfrac{A_1 + A_2 +...+ A_n}{A_n}\to 1\hspace{0.05in}\hspace{0.1in}\mbox{a.s.}
\end{equation}
As in the proof of part (i), it is enough to study the zeros of 
$$Q_n(z) = z^n + \dfrac{A_{n-1}}{A_n}z^{n-1} + ... + \dfrac{A_1}{A_n}z - \dfrac{A_1 + A_2 +... + A_n}{A_n}.$$ 
We once again use Theorem BSS to study the limiting behavior of the zeros of $Q_n,$ $n\in\mathcal{N}.$ To begin with, we have 
$$\|Q_n\|_{\mathbb {T}} \leq\dfrac{2 (A_1 + A_2 + .. + A_n)}{A_n}.$$ 
Now using \eqref{converge}, we see that along $\mathcal{N},$ $\limsup_{n\to\infty}\|Q_n\|_{\mathbb {T}}^{1/n}\le 1.$ Since $Q_n$ is monic, the condition on the leading coefficient is trivially satisfied. We show that \eqref{4.9} holds for $K =\{0\}$. Indeed, we have 
$$|Q_n(0)|^{1/n} = \left(\dfrac{A_1 + A_2 + .. + A_n}{A_n}\right)^{1/n},$$ 
so that \eqref{4.9} holds a.s. by \eqref{converge}. Therefore along the subsequence $\mathcal N$ we have $\tau_n \stackrel{w}{\rightarrow}\mu_{\mathbb{T}}$ a.s.

\end{proof}

\textbf{Acknowledgement.} Research of the first author was partially supported by the National Security Agency (grant H98230-15-1-0229) and by the American Institute of Mathematics.

{\emph 
Department of Mathematics,

Oklahoma State University

Stillwater, OK 74074

Email : igor@math.okstate.edu

        koushik.ramachandran@okstate.edu
}

\end{document}